\newtheorem{teor}{Theorem}[section]
\newtheorem{prop}[teor]{Proposition}
\theoremstyle{remark}
\theoremstyle{definition}
\newtheorem{defi}{Definition}[section]
\DeclareMathOperator{\dom}{dom}
\title{\textbf{An Approach to Differential Invariants of $G\,$-\,Structures}
\footnote{
\textit{Mathematics Subject
Classification} 2000.\
Primary
53A55, %Differential invariants (local theory), geometric objects
53C15, %(general geometric structures on manifolds),
53A30; %(Conformal differential geometry);
53C10, %($G$-structures),
53B20. %Local Riemannian geometry
%53C50, %(Lorentz manifolds),
%83C99.%(General relativity: None
%of the above, but in this section).
\newline
\indent \indent
\textit{Key Words.\/}
G-structures,
differential invariants, scalar curvature,
conformal geometry.
\newline
\indent \indent
Supported by Junta de Andaluc\'{\i}a (Spain),
under grant P.A.I. ID Code FQM-324.}
}
\author{\textsc{Ignacio S\'anchez-Rodr\'{\i}guez}
}
\date{}
\begin{document}
\maketitle

\begin{abstract}
\noindent 
In a natural way, the diffeomorphisms of a manifold onto itself act on the
reference frame bundles of any order and on the bundles associated with them.
Due to the transitivity, the invariants by diffeomorphisms of an associated 
bundle correspond to the real functions on the quotient space of the typical fiber 
by the action that defines the associated bundle.

Scalar differential invariants are functions on bundles of jets of sections of 
associated bundles, which are invariant by the standard action of diffeomorphisms. 
We show that these jet bundles are, in turn, associated bundles with reference
frame bundles of higher order. By the previous result, this will allow to recognize 
scalar differential invariants as real functions over a quotient space, 
independently of the base manifold.

We apply the above to $G$-structures, and describe their scalar differential invariants
as functions over a certain quotient space. 
This quotient space contains a dense differentiable manifold, whose dimension is 
``the number of functionally independent scalar differential invariants''. 
We will obtain a lower bound of that number.
\end{abstract}

\section{Introduction}
\label{intro}

The paradigm of scalar differential invariant of a
$G$-structure is the scalar curvature
$\textbf{S}_{\mathbf{g}}\colon M\to \mathbb{R}$ of a 
Riemannian manifold $(M,\mathbf{g})$. 
On each $m\in M$ it is defined
$\mathbf{S}_{\mathbf{g}}(m)$ from the values of $\mathbf{g}$
and its partial derivatives in $m$ up to second order,
that is to say, it depends of the 2-jet of the metric at $m$.

It is said that \emph{the scalar curvature
	is invariant by diffeomorphisms} because
if $\varphi \colon M\to M$ is a 
diffeomorphism then the scalar curvature
of the metric $\varphi ^{-1 *}\mathbf{g}$ verifies 
$\mathbf{S_{\varphi ^{-1 *}\mathbf{g}}}=
	\mathbf{S}_{\mathbf{g}}\circ\varphi ^{-1}$.

The scalar differential invariants of the
	metrics are well studied and it is known
the number of functionally independent metric invariants
-- 	depending on the dimension of $M$ and the
maximal order of derivatives of $\mathbf{g}$
involved -- and how they are obtained from the
Riemann curvature.

This essay is a preparatory study in order to access the
scalar differential invariants of other $G$-structures types.
Particularly, we are interested in the
differential invariants of
	conformal structures -- do not confuse these with
the \emph{conformal invariants with weight}; in any case,
the invariants of our work are of \emph{weight zero}.

\section{Notations and basic concepts.}
 \begin{itemize}
	\item $P$, principal bundle over $M$ with 
	group $G$; right action $p\cdot g$;  $P/G=M$.
	\item
	$W$, manifold; left action $g\cdot w$; 
	$G\backslash W$ -- topological space, 
	in general not a manifold;
	notation: $[w]$.
	\item
	$P\times W$; right action $(p,w)\cdot 
		g :=(p\cdot g,g^{-1}\cdot w)$; we obtain the associated bundle
	$P(W)\equiv (P\times W)/G$; notation: $[p,w]$.
	\item $LM$, linear frame bundle
	of $M$ with group $\mathrm{Gl}_n$.
	\item
	$\mathrm{Gl}_n/\mathrm{O}_n$, quotient manifold; 
	left action of $\mathrm{Gl}_n$ on 
	$\mathrm{Gl}_n/\mathrm{O}_n$.
	\item
	$LM(\mathrm{Gl}_n/\mathrm{O}_n)$,
	\emph{the bundle of metrics}: 
	Giving a metric $\mathbf{g}$ is equivalent
	to give a section $\boldsymbol{\sigma}_{\mathbf{g}} 
		\colon M\to LM(\mathrm{Gl}_n/\mathrm{O}_n)$, 
	$\boldsymbol{\sigma}_{\mathbf{g}} (m)=[l,\mathrm{O}_n]$,
	with $l\in LM$ being some basis 
	$\mathbf{g}$-orthonormal.
	Reciprocally, a section $\boldsymbol{\sigma}$ defines a
	metric establishing which are the orthonormal bases.
	\item The bundle 
	$J^rLM(\mathrm{Gl}_n/\mathrm{O}_n)$ of $r$-jets of
	sections of $LM(\mathrm{Gl}_n/\mathrm{O}_n)$ has, 
	for each element
	$j^r_m\boldsymbol{\sigma} $, the information 
	of the $r$-jet at $m$ of the corresponding metric 
	$\mathbf{g}$.
	\item We can define the \emph{
		scalar curvature function}
	$\textbf{S}\colon J^2LM(\mathrm{Gl}_n/\mathrm{O}_n)\to 
		\mathbb{R}$ so that $\textbf{S}\circ j^2
		\boldsymbol{\sigma}_{\mathbf{g}} =\textbf{S}_{\mathbf{g}}$ 
	-- that is $\textbf{S}( j^2_m
		\boldsymbol{\sigma}_{\mathbf{g}}) :=\textbf{S}_{\mathbf{g}}(m)$.
	\item Action of a diffeomorphism $\varphi \colon M\to M$
	\begin{itemize}
		\item[] on $LM$: 
		$\qquad\qquad\qquad\qquad\qquad\qquad\quad 
			l\mapsto \varphi_{*}\circ l$.
		\item[] on $LM(W)$: 
		$\qquad\qquad\qquad\qquad\Bar{\varphi }\colon
			[l,w]\mapsto [\varphi_{*}\circ l,w]$.
		\item[] on sections $\boldsymbol{\sigma}\colon M\to LM(W)$:
		$\qquad\boldsymbol{\sigma}\mapsto \Bar{\varphi }
			\circ\boldsymbol{\sigma}\circ\varphi^{-1}$.
		\item[] on jets of sections: $\qquad \qquad
			\widehat{\varphi }^{r}\colon j^r_m
			\boldsymbol{\sigma}\mapsto j^r_{\varphi(m)}(\Bar{\varphi }
			\circ\boldsymbol{\sigma}\circ\varphi^{-1})$.
	\end{itemize}
	\item In the case $W=\mathrm{Gl}_n/\mathrm{O}_n$,
	it is easy to prove that 
	$\boldsymbol{\sigma}_{\varphi ^{-1 *}\mathbf{g}}=
		\Bar{\varphi }\circ\boldsymbol{\sigma}_{\mathbf{g}}\circ\varphi^{-1}$ 
	and, using this fact, to prove that 
	$$\mathbf{S_{\varphi ^{-1 *}\mathbf{g}}}=
		\mathbf{S}_{\mathbf{g}}\circ\varphi ^{-1},
		\ \forall\,\mathbf{g},\ \forall\,\varphi\quad
		\Longleftrightarrow \quad\textbf{S}=\textbf{S}\circ
		\widehat{\varphi }^{2},\ \forall\,\varphi .$$
\end{itemize}

\section{Scalar differential invariants}

\subsection{The action of diffeomorphisms on frame bundles and associated bundles}
 Let $F^rM$ be the \emph{ $r$-th order
	frame bundle over} $M$, with $\dim M=n$. This is a principal bundle
with group $\mathrm{G}^r_n$, 
the \emph{$r$-th jet group}.
The right action of
$j^r_{_0}\xi \in \mathrm{G}^r_n$ on
$j^r_{_0}\psi \in F^rM$ is defined by
$j^r_{_0}(\psi \circ \xi ) \in F^rM$,
with $\xi $ being a diffeomorphism between neighborhoods of
	$0\in \mathbb{R}^n$, with $\xi (0)=0$
and $\psi $ being a diffeomorphism between a neighborhood
	of $0$ and an open set of $M$.

The pseudogroup $\mathcal{D}M$ of
	diffeomorphisms between open sets of $M$ acts on $F^rM$:
The left action of
$\varphi \in \mathcal{D}M$ on
$j^r_{_0}\psi \in F^rM$ is defined by
$j^r_{_0}(\varphi \circ \psi )\in F^rM$,
when $\psi (0)\in \dom\varphi $.

This action of diffeomorphisms is transitive:
$$\forall\, j^r_{_0}\psi, j^r_{_0}\psi^{\prime}\in F^rM,\
\exists \,\varphi \in \mathcal{D}M\ \text{such that}\
j^r_{_0}(\varphi \circ \psi )=j^r_{_0}\psi^{\prime}\,.$$

Let $F^rM(W)$ be
an associated bundle to $F^rM$; 
the left action of 
$\varphi \in \mathcal{D}M$ defined there 
is given, over $\dom\varphi $, by:
$$\Bar{\varphi }^r
	\colon F^rM(W)\to F^rM(W),\quad
	[j^r_{_0}\psi ,w]\mapsto
	[j^r_{_0}(\varphi \circ \psi ),w].$$

\subsection{Invariants of associated bundles}

\begin{defi}[Invariants of associated bundles]
A function
$f\colon F^rM(W)\to \mathbb{R}$
is a \textit{scalar differential invariant}
	of $F^rM(W)$ if,
$\forall\, w\in W$, $\forall\,\varphi \in \mathcal{D}M$ and
$\forall\, j^r_{_0}\psi \in F^rM$ with
$\psi 0\in \operatorname{dom}\varphi $, it is verified:
$$f[j^r_{_0}\psi ,w]=f[j^r_{_0}(\varphi \circ \psi ),w].$$
Equivalently, $\forall\,\varphi \in \mathcal{D}M$,
$f\circ \Bar{\varphi }^r=f$ over
$\operatorname{dom}\varphi $.
\end{defi}
We can reduce the problem of finding
scalar differential invariants of $F^rM(W)$
to a problem that is \emph{independent from} $M$ and its
diffeomorphisms.
\begin{teor}[Independence from $M$ of the invariants]
The set of scalar differential invariants
of $F^rM(W)$ is in bijective correspondence with the set of
functions $h\colon \mathrm{G}^r_n\backslash W\to \mathbb{R}$
such that $h\circ \Pi $ is differentiable -- being
$\Pi \colon W\to \mathrm{G}^r_n\backslash W$,
$\ w\mapsto [w]$.
\end{teor}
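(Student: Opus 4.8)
The plan is to construct the bijection explicitly and verify it is well-defined in both directions. Given a scalar differential invariant $f\colon F^rM(W)\to\mathbb{R}$, I would fix once and for all a reference frame $j^r_{_0}\psi_0\in F^rM$ over some point, and define a map on $W$ by $w\mapsto f[j^r_{_0}\psi_0,w]$. The crucial observation is that, by the transitivity of the $\mathcal{D}M$-action on $F^rM$, the value $f[j^r_{_0}\psi,w]$ is independent of which frame $j^r_{_0}\psi$ we use: any two frames are related by some $\varphi\in\mathcal{D}M$, and the invariance $f\circ\Bar{\varphi}^r=f$ forces the values to agree. Thus $f$ is completely determined by its restriction to the fiber over a single point, i.e.\ by a function on $W$.

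The next step is to check that this induced function on $W$ descends to the quotient $\mathrm{G}^r_n\backslash W$. For $\xi$ representing an element $g=j^r_{_0}\xi\in\mathrm{G}^r_n$, the point $[j^r_{_0}\psi_0,\,g\cdot w]$ equals $[j^r_{_0}(\psi_0\circ\xi),\,w]$ by the definition of the associated bundle. Since $\psi_0\circ\xi$ is again a frame, the frame-independence just established gives $f[j^r_{_0}\psi_0,g\cdot w]=f[j^r_{_0}\psi_0,w]$, so the function is constant on $\mathrm{G}^r_n$-orbits and therefore factors as $h\circ\Pi$ for a unique $h\colon\mathrm{G}^r_n\backslash W\to\mathbb{R}$. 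Conversely, given $h$ with $h\circ\Pi$ differentiable, I would define $f[j^r_{_0}\psi,w]:=h([w])$; this is well-defined on $F^rM(W)$ because the associated-bundle relation changes $(j^r_{_0}\psi,w)$ only within a $\mathrm{G}^r_n$-orbit of $w$, on which $h\circ\Pi$ is constant, and it is manifestly invariant under $\Bar{\varphi}^r$ since that action does not touch the $W$-coordinate. These two assignments are visibly inverse to each other.

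The main obstacle I anticipate is not the set-theoretic bijection but the differentiability bookkeeping, and making precise the requirement that $h\circ\Pi$ be differentiable. Since $\mathrm{G}^r_n\backslash W$ is in general only a topological space, smoothness of $h$ cannot be stated intrinsically, which is exactly why the theorem phrases the condition through the projection $\Pi$. I would argue that $f$ being a differentiable function on the manifold $F^rM(W)$ is equivalent to $h\circ\Pi$ being differentiable on $W$, by using a local trivialization of $F^rM(W)$: over a trivializing open set the bundle looks like (base)$\times W/G$-identifications, and the $W$-factor in a local section $w\mapsto[j^r_{_0}\psi,w]$ gives a smooth embedding of $W$ whose composition with $f$ recovers $h\circ\Pi$. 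Thus smoothness transfers faithfully in both directions, and the differentiability clause in the statement is precisely the correct translation of smoothness of $f$ across the non-manifold quotient.
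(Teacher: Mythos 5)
Your proposal is correct and follows essentially the same route as the paper: the bijection is built identically (frame-independence of $f[j^r_{_0}\psi,w]$ via transitivity and invariance, orbit-constancy via the associated-bundle relation $[j^r_{_0}\psi,\,j^r_{_0}\xi\cdot w]=[j^r_{_0}(\psi\circ\xi),w]$, and the obvious inverse assignment $f[j^r_{_0}\psi,w]:=h[w]$). The differentiability transfer is also the paper's argument in light disguise -- the paper composes $f$ with $w\mapsto[z,w]$ for one direction and uses that $f$ is differentiable iff $f\circ\pi$ is (with $\pi\colon F^rM\times W\to F^rM(W)$) for the other, which is what your local-trivialization bookkeeping amounts to; only note that the fiber of $F^rM(W)$ is $W$ itself, not a quotient of $W$, as your phrasing briefly suggests.
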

\begin{proof}
	Given an invariant $f\colon F^rM(W)\to \mathbb{R}$, 
	the function
	$h[w]:=f[j^r_{_0}\psi ,w]$
	is well defined because:
	
	(\textit{i}) for another
	$j^r_{_0}\psi ^\prime $,
	we obtain a diffeomorphism
	$\varphi =\psi ^\prime \circ\psi ^{-1}$
	between neighborhoods of
	$\psi 0$ and $\psi ^\prime 0$, and then
	$f[j^r_{_0}\psi ^\prime ,w]
	=f[j^r_{_0}(\varphi \circ \psi ),w]
	=f[j^r_{_0}\psi ,w]$;
	
	(\textit{ii}) if $w^\prime \in [w]$,
	there exists $j^r_{_0}\xi \in \mathrm{G}^r_n$
	such that $[j^r_{_0}\psi ,w^\prime ]=
	[j^r_{_0}\psi ,j^r_{_0}\xi \cdot w]=
	[j^r_{_0}(\psi \circ \xi ),w ]$, then
	$f[j^r_{_0}\psi ,w^\prime ]=
	f[j^r_{_0}(\psi \circ \xi ),w ]$ and,
	from (\textit{i}),
	$f[j^r_{_0}\psi ,w^\prime ]=
	f[j^r_{_0}\psi ,w ]$.
	
	The differentiability follows from the identity
	$h\circ \Pi =f\circ \pi \circ \iota _z$,
	where $\pi \colon F^rM\times W\to F^rM(W)$
	is the natural projection and
	$\iota _z\colon W\to F^rM\times W$,
	$\iota _z(w):=(z,w)$, with $z\in F^rM$.
	
	Reciprocally, given
	$h\colon \mathrm{G}^r_n\backslash W\to \mathbb{R}$,
	the function $f\colon F^rM(W)\to \mathbb{R}$ defined by
	$f[j^r_{_0}\psi ,w ]:=h[w]$,
	$\forall\, j^r_{_0}\psi \in F^rM$, 
	which obviously is invariant,
	is well defined because
	$f[j^r_{_0}(\psi \circ \xi),j^r_{_0}\xi ^{-1}\cdot w ]=
	h[j^r_{_0}\xi ^{-1}\cdot w]=h[w],\quad
	\forall \,j^r_{_0}\xi \in \mathrm{G}^r_n$
	
	It is known that $f$ is differentiable if and only if
	$f\circ \pi $ is differentiable; as
	$f\circ \pi =h\circ \Pi \circ \pi _{_2}$,
	with $\pi _{_2}\colon F^rM\times W\to W$
	being the projection $\pi _{_2}(z,w)=w$,
	the differentiability of $f$ follows of
	the differentiability of $h\circ \Pi$.
\end{proof}

\section{Jets of sections of associated bundles}
\subsection{Bundles of $r$-jets of sections}
Given a bundle $E$ over 
$M$ with fiber $W$,
it is obtained the \emph{bundle of $r$-jets of
	local sections} of $E$,
$J^rE$, whose basis is $M$ and fiber is 
the space $J^r_{_0}(\mathbb{R}^n,W)$
of $r$-jets at $0$ of
applications of $\mathbb{R}^n$ to $W$.

Let us see how work in the case 
$E=F^kM(W)$. We use the natural section
of $F^kM$ induced from a chart $(x,U)$:
\begin{equation*}
	%\label{f1}
	\widehat{x}^{k}\colon U \to F^kM,
	\quad
	p\mapsto \widehat{x}^{k}p:=
	j^k_{_0}(x^{-1}\circ \tau _{x(p)}),
	\end{equation*}
with
$\tau _{x(p)}$ being the translation
by $x(p)$ on $\mathbb{R}^n$.
A local trivialization of $E$ is:
$$\Psi ^x\colon E|_U\to U\times W,\quad
	[\widehat{x}^{k}p,w]\mapsto (p,w).$$

Now, a local section
$\sigma  \colon U\to E$ is
characterized by the application
$\sigma ^x\colon U\to W$
such that $\Psi ^x(\sigma (p))=(p,\sigma ^x(p))$.
We can write
$\sigma =[\widehat{x}^{k},\sigma ^x]$.
Then, a local trivialization of $J^rE$ is
$$J^rE|_U\to U\times J^r_{_0}(\mathbb{R}^n,W),\quad
	j^r_p\sigma \mapsto \bigl(p,j^r_{_0}(\sigma ^x\circ
	x^{-1}\circ \tau _{xp})\bigr).$$

\subsection{The bundle of jets of sections is an associated bundle}
\begin{prop}[1]
	If $E=F^kM(W)$ is an associated bundle to $F^kM$
	with typical fiber $W$
	then $J^rE$ is an associated bundle to $F^{r+k}M$
	with typical fiber $J^r_{_0}(\mathbb{R}^n,W)$.
\end{prop}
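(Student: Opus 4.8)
The plan is to exhibit an explicit isomorphism of fibre bundles over $M$ between $J^rE$ and an associated bundle $F^{r+k}M(J^r_0(\mathbb{R}^n,W))$. The first task is to equip the candidate typical fibre $J^r_0(\mathbb{R}^n,W)$ with a left action of the group $\mathrm{G}^{r+k}_n$, built out of the given left action of $\mathrm{G}^k_n$ on $W$ (the action defining $E=F^kM(W)$). For $g=j^{r+k}_0\xi\in\mathrm{G}^{r+k}_n$ and $\mu=j^r_0\mu_W\in J^r_0(\mathbb{R}^n,W)$ I would set
\[
g\cdot\mu:=j^r_0\Bigl(u\mapsto j^k_0\bigl(\tau_{-u}\circ\xi\circ\tau_{\xi^{-1}(u)}\bigr)\cdot\mu_W(\xi^{-1}(u))\Bigr),
\]
where the inner dot is the $\mathrm{G}^k_n$-action on $W$; note that $\tau_{-u}\circ\xi\circ\tau_{\xi^{-1}(u)}$ fixes $0$, so its $k$-jet does lie in $\mathrm{G}^k_n$, and that at $u=0$ this coefficient reduces to $j^k_0\xi$, the image of $g$ under the natural projection $\mathrm{G}^{r+k}_n\to\mathrm{G}^k_n$. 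A short count shows the right-hand side depends only on $j^r_0\mu_W$ and on $j^{r+k}_0\xi$: the $r$-jet in $u$ of $\mu_W(\xi^{-1}(u))$ uses only $r$ derivatives of $\xi^{-1}$, while the $\mathrm{G}^k_n$-valued factor, already carrying $k$ derivatives of $\xi$, is differentiated $r$ times in $u$, which is exactly where the order $r+k$ appears.

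Second, I would define the comparison map
\[
\Phi\colon F^{r+k}M\times J^r_0(\mathbb{R}^n,W)\to J^rE,\qquad (j^{r+k}_0\psi,\;j^r_0\mu_W)\mapsto j^r_{\psi(0)}\sigma,
\]
where $\sigma$ is the local section of $E$ near $\psi(0)$ given by $\sigma(p):=\bigl[j^k_0(\psi\circ\tau_{\psi^{-1}(p)}),\,\mu_W(\psi^{-1}(p))\bigr]$. This is a genuine section because $\psi$ is a local diffeomorphism sending $0$ to $\psi(0)$, and $j^k_0(\psi\circ\tau_{\psi^{-1}(p)})$ is the frame at $p$ attached to the chart $\psi^{-1}$, exactly as $\widehat{x}^k$ was attached to $x$. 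The same bookkeeping as above shows $j^r_{\psi(0)}\sigma$ depends only on $j^{r+k}_0\psi$ and $j^r_0\mu_W$, so $\Phi$ is well defined, and by construction it covers $\mathrm{id}_M$. Taking $\psi=x^{-1}\circ\tau_{x(p)}$ recovers $\sigma(p')=[\widehat{x}^kp',\mu_W(x(p')-x(p))]$, which matches the chart trivialisation of $J^rE$ recalled above; hence $\Phi$ is onto and is injective in the $J^r_0(\mathbb{R}^n,W)$-variable.

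Third, I would prove the equivariance $\Phi(z\cdot g,\mu)=\Phi(z,g\cdot\mu)$ for all $z\in F^{r+k}M$, $g\in\mathrm{G}^{r+k}_n$, $\mu\in J^r_0(\mathbb{R}^n,W)$. This is the technical heart, and it is precisely what forces the formula for $g\cdot\mu$: writing $z=j^{r+k}_0\psi$ and $z\cdot g=j^{r+k}_0(\psi\circ\xi)$, and setting $q=\xi^{-1}(\psi^{-1}(p))$, the multiplicativity of frames gives $j^k_0(\psi\circ\xi\circ\tau_q)=j^k_0(\psi\circ\tau_{\xi(q)})\cdot j^k_0(\tau_{-\xi(q)}\circ\xi\circ\tau_q)$; absorbing the second factor into $W$ through its $\mathrm{G}^k_n$-action turns the section attached to $\psi\circ\xi$ into the section attached to $\psi$ with $W$-part $g\cdot\mu$, and taking $r$-jets at $p=\psi(0)$ yields the claim. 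Granting equivariance and fibrewise injectivity, $\Phi$ descends to a bijection $\bar\Phi\colon F^{r+k}M(J^r_0(\mathbb{R}^n,W))\to J^rE$ over $M$; the same two properties also make the left-action axiom $(g_1g_2)\cdot\mu=g_1\cdot(g_2\cdot\mu)$ automatic, since both sides are detected by $\Phi$. Finally I would check in charts, using the trivialisation of $J^rE$ already displayed, that $\bar\Phi$ and its inverse are differentiable, so that $\bar\Phi$ is an isomorphism of fibre bundles. The step I expect to be the main obstacle is pinning down this $\mathrm{G}^{r+k}_n$-action together with its equivariance: one must recognise that the group acts by coupling a reparametrisation of the source $\mathbb{R}^n$ with the target $\mathrm{G}^k_n$-action on $W$, intertwined jet-by-jet, which is exactly what produces the order $r+k$. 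Everything after that is routine bookkeeping with jets.
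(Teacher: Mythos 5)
Your proposal is correct and takes essentially the same route as the paper: your formula for the $\mathrm{G}^{r+k}_n$-action on $J^r_0(\mathbb{R}^n,W)$ coincides with the paper's action $j^r_0\bigl((j^k\xi\cdot\mu)\circ\xi^{-1}\bigr)$ after the substitution $u=\xi(v)$, and your map $\bar\Phi$ is precisely the inverse of the paper's chart-defined isomorphism $\Lambda$. The only difference is presentational: you establish well-definedness by equivariant descent from $F^{r+k}M\times J^r_0(\mathbb{R}^n,W)$ (deriving the left-action axiom as a by-product), whereas the paper defines $\Lambda$ chart-wise and checks independence of the chart.
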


\begin{proof}
We consider the following action of
$j^{r+k}_{_0}\xi \in \mathrm{G}^{r+k}_n$ on
$j^r_{_0}\mu \in J^r_{_0}(\mathbb{R}^n,W)$ defined by
\begin{equation}
	\label{f2}
	j^r_{_0}\bigl( (j^k\xi \cdot \mu )
	\circ \xi ^{-1}\bigr) \in J^r_{_0}(\mathbb{R}^n,W),
	\end{equation}
in which occurs the $W$-valued function given by
\begin{equation*}
%\label{f3}
(j^k\xi \cdot \mu )(v):=j^k_{_0}
(\tau _{-\xi (v)}\circ \xi \circ \tau _v)\cdot \mu (v),
\quad v\in\mathbb{R}^n;
\end{equation*}
the last dot here refers
to the action of
$\mathrm{G}^k_n$ on $W$.

It is proved that (1)
defines a left action of
$\mathrm{G}^{r+k}_n$ on
$J^r_{_0}(\mathbb{R}^n,W)$ and then it produces an associated bundle
$F^{r+k}M(J^r_{_0}(\mathbb{R}^n,W))$.
We can define a fibered application
-- in the domain of the chart $x$ -- by
\begin{eqnarray*}
		\label{f3}
		\Lambda\colon J^rE &\longrightarrow & 
		F^{r+k}M(J^r_{_0}(\mathbb{R}^n,W))\\
		j^r_p\sigma &\longmapsto &\bigl[\widehat{x}^{r+k}p,\,
		j^r_{_0}(\sigma ^x\circ
		x^{-1}\circ \tau _{x(p)})\bigr],
\end{eqnarray*}
which is an isomorphism of bundles. We proved that this definition 
is independent of the chart and, thus, that it is a bundle isomorphism globally defined.
\end{proof}

\subsection{Action of diffeomorphisms on jets of sections}
In a bundle $E=F^kM(W)$ it is defined the action of a diffeomorphism $\varphi $ by:
$$\Bar{\varphi }^k\colon E\to E,\
	[j^k_{_0}\psi ,w]\mapsto
	[j^k_{_0}(\varphi \circ \psi ),w].$$
If $\sigma $
is a section of $E$ in a neighborhood of $p\in M$ then
$\Bar{\varphi }^k\circ \sigma
\circ \varphi ^{-1}$ is a section
of $E$ in a neighborhood of $\varphi (p)$. Thus, it is defined the action of $\varphi $
on $J^rE$ by:
\begin{equation*}
	\widehat{\varphi }^{r,k}\colon
	J^rE \to J^rE,
	\quad
	j^r_p\sigma \mapsto
	j^r_{\varphi (p)}(\Bar{\varphi }^k
	\circ \sigma \circ \varphi ^{-1}).
	\end{equation*}
\vspace{-3mm}
\begin{defi}[Invariants of bundles of jets of sections]
	%Being $E=F^kM(W)$,
	A differentiable function
	$f\colon J^r\big(F^kM(W)\big) \to \mathbb{R}$
	is a \textit{scalar differential invariant (SDI)
			of $r$-th order over} $F^kM(W)$ if,
	$\forall \varphi \in \mathcal{D}M$,
	$f\circ \widehat{\varphi }^{r,k}=f$
	on $\dom \varphi $.
\end{defi}

The two definitions of SDI
of $F^{r+k}M(J^r_{_0}(\mathbb{R}^n,W))$
and of SDI
of $r$-th order over $F^kM(W)$
are equivalent because of the
commutative diagram:
%La acción de $\varphi $ y, por tanto,
%la noción de invariante diferencial es la misma bajo
%el isomorfismo $\Lambda$, ya que el siguiente diagrama es conmutativo:
\begin{equation*}
	\label{f8}
	\begin{array}{ccc}
	J^r\big(F^kM(W)\big) &
	\xrightarrow{\Lambda } &
	F^{r+k}M(J^r_{_0}(\mathbb{R}^n,W)) \\
	\Big\downarrow\vcenter{%
		\rlap{$\scriptstyle{\widehat{\varphi }^{r,k}}$}} & &
	\Big\downarrow\vcenter{%
		\rlap{$\scriptstyle{\Bar{\varphi }^{r+k}}$}}\\
	J^r\big(F^kM(W)\big) &
	\xrightarrow{\Lambda } &
	F^{r+k}M(J^r_{_0}(\mathbb{R}^n,W)).
	\end{array}
	\end{equation*}

\section{Scalar differential invariants of $G$-structures}
\subsection{Bundle of $G$-structures}
Let $G$ be a closed subgroup of $\mathrm{Gl}_n$.
The left action of $\mathrm{Gl}_n$ on
$\mathrm{Gl}_n/G$ gives the associated bundle
$M_G\equiv LM(\mathrm{Gl}_n/G)$
which is called the
\textit{bundle of $G$-structures}.
Its sections, $\sigma \colon M\to M_G$,
are in correspondence
with the principal subbundles, 
$P=\{l\in LM\colon [l,G]\in \sigma (M)\}$,
with group $G$ which are the so-called
$G$-structures on $M$.

Let $J^rM_G$ be the bundle of $r$-jets
of local sections of $M_G$,
whose fiber is
$J^r_{_0}(\mathbb{R}^n,\mathrm{Gl}_n/G)$.
The action of $\varphi \in\mathcal{D}M$
on $J^rM_G$ is defined by:
\begin{equation*}
	\widehat{\varphi }^{r,1}\colon
	J^rM_G \to J^rM_G,
	\quad
	j^r_p\sigma \mapsto
	j^r_{\varphi (p)}(\Bar{\varphi }
	\circ \sigma \circ \varphi ^{-1}).
	\end{equation*}
\vspace{-3mm}
\begin{defi}[Invariants of $G$-structures]
	A \textit{scalar differential invariant of $r$-th order of $G$-structures
			on} $M$
	is a differentiable function
	$f\colon J^rM_G\to \mathbb{R}$
	which verifies $f\circ \widehat{\varphi }^{r,1}=f$
	over $\dom \varphi $, $\forall \varphi \in \mathcal{D}M$.
\end{defi}
[Added in translation: An example is the scalar curvature of Riemannian geometry
		$\textbf{S}\colon J^2M_{\mathrm{O}_n}\to 
			\mathbb{R}$, being 
		$M_{\mathrm{O}_n}\equiv LM(\mathrm{Gl}_n/\mathrm{O}_n)$
		(see the Introduction)]

The isomorphism $\Lambda $ of Proposition (1) allows to
identify the bundle of jets of sections $J^rM_G$ and 
the associated bundle $F^{r+1}M
	\bigl(J^r_{_0}(\mathbb{R}^n,\mathrm{Gl}_n/G)\bigr)$ with respect to the  action of
	$j^{r+1}_{_0}\xi \in \mathrm{G}^{r+1}_n$ on
	$j^r_{_0}\mu \in J^r_{_0}(\mathbb{R}^n,\mathrm{Gl}_n/G)$ defined by
\begin{equation*}
	j^r_{_0}\bigl( (D\xi \cdot \mu)
	\circ \xi ^{-1}\bigr) \in J^r_{_0}(\mathbb{R}^n,\mathrm{Gl}_n/G),
	\end{equation*}
being $(D\xi \cdot \mu)(v):=D\xi |_v\cdot \mu(v)$,
with $v\in \mathbb{R}^n$ and the last dot for the action of
$\mathrm{Gl}_n$ on $\mathrm{Gl}_n/G$.

Therefore, a scalar differential invariant of $G$-structures
can be seen as a  differentiable function
$f\colon
	F^{r+1}M\bigl(J^r_{_0}(\mathbb{R}^n,\mathrm{Gl}_n/G)\bigr)\to \mathbb{R}$
which verifies
$f\circ \Bar{\varphi }^{r+1}=f$,
$\forall \varphi \in \mathcal{D}M$.
Now, as one application to $G$-structures of the theorem of
independence from $M$ of the invariants, we obtain:
\begin{teor}[Scalar differential $G$-invariants of $r$-th order]
	The set of scalar differential invariants
	of $r$-th order of $G$-structures on a
	manifold $M$
	is in natural bijective correspondence with the functions
	$h\colon \mathrm{G}^{r+1}_n\backslash 
		J^r_{_0}(\mathbb{R}^n,\mathrm{Gl}_n/G)\to \mathbb{R}$
	such that $h\circ \Pi $ is differentiable. 
	We say that $h$ is a \emph{
		scalar differential $G$-invariant of $r$-th order.}
\end{teor}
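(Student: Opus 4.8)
The plan is to obtain this Theorem as the composition of the two structural results already established: Proposition (1) and the Theorem of independence from $M$ of the invariants. The starting observation is that the linear frame bundle is the first-order frame bundle, $LM=F^1M$, so the bundle of $G$-structures is
$$M_G\equiv LM(\mathrm{Gl}_n/G)=F^1M(\mathrm{Gl}_n/G),$$
which is exactly an associated bundle of the form $F^kM(W)$ with $k=1$ and typical fiber $W=\mathrm{Gl}_n/G$.

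First I would apply Proposition (1) with this $k=1$. It yields that $J^rM_G=J^r\big(F^1M(\mathrm{Gl}_n/G)\big)$ is an associated bundle to $F^{r+1}M$ with typical fiber $J^r_{_0}(\mathbb{R}^n,\mathrm{Gl}_n/G)$, and provides the explicit bundle isomorphism $\Lambda$. When $k=1$ the auxiliary action $j^k\xi\cdot\mu$ appearing in the proof of Proposition (1) specializes to $D\xi\cdot\mu$: since $\mathrm{G}^1_n=\mathrm{Gl}_n$ and the map $\tau_{-\xi(v)}\circ\xi\circ\tau_v$ fixes $0$, its $1$-jet $j^1_{_0}(\tau_{-\xi(v)}\circ\xi\circ\tau_v)$ is just the differential $D\xi|_v$, so that $(j^1\xi\cdot\mu)(v)=D\xi|_v\cdot\mu(v)$. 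This recovers precisely the action of $\mathrm{G}^{r+1}_n$ on $J^r_{_0}(\mathbb{R}^n,\mathrm{Gl}_n/G)$ displayed just before the statement.

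Next, I would transfer the invariance condition across $\Lambda$. The commutative diagram of Subsection~4.3 shows that $\Lambda$ intertwines the action $\widehat{\varphi}^{r,1}$ on $J^rM_G$ with the action $\Bar{\varphi}^{r+1}$ on $F^{r+1}M\big(J^r_{_0}(\mathbb{R}^n,\mathrm{Gl}_n/G)\big)$, i.e. $\Lambda\circ\widehat{\varphi}^{r,1}=\Bar{\varphi}^{r+1}\circ\Lambda$. Hence $f\mapsto f\circ\Lambda^{-1}$ is a bijection carrying the functions $f$ with $f\circ\widehat{\varphi}^{r,1}=f$ (the SDI of $r$-th order of $G$-structures) onto the functions $\tilde f$ on $F^{r+1}M\big(J^r_{_0}(\mathbb{R}^n,\mathrm{Gl}_n/G)\big)$ with $\tilde f\circ\Bar{\varphi}^{r+1}=\tilde f$ for all $\varphi\in\mathcal{D}M$ — that is, the scalar differential invariants of that associated bundle in the sense of the Definition of invariants of associated bundles; the bijection respects differentiability because $\Lambda$ is a bundle isomorphism.

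Finally, I would invoke the Theorem of independence from $M$ applied to the associated bundle $F^{r+1}M(W')$ with $W'=J^r_{_0}(\mathbb{R}^n,\mathrm{Gl}_n/G)$. This puts those invariants in bijective correspondence with the functions $h\colon\mathrm{G}^{r+1}_n\backslash J^r_{_0}(\mathbb{R}^n,\mathrm{Gl}_n/G)\to\mathbb{R}$ for which $h\circ\Pi$ is differentiable, which is exactly the asserted correspondence. Composing the three bijections — the $\Lambda$-transfer, the Proposition~(1) identification, and the independence theorem — gives the stated natural bijection. There is no genuinely hard step here; the only point requiring care is checking that the three actions match up, and in particular verifying the specialization $j^1\xi\cdot\mu=D\xi\cdot\mu$ and the commutativity $\Lambda\circ\widehat{\varphi}^{r,1}=\Bar{\varphi}^{r+1}\circ\Lambda$, both of which are already recorded in the preceding subsections.
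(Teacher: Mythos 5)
Your proposal is correct and follows essentially the same route as the paper: the paper also obtains the theorem by specializing Proposition (1) to $k=1$ (where the action becomes the $D\xi$ action stated before the theorem), using the commutative diagram of Subsection 4.3 to identify the two notions of invariant, and then applying the theorem of independence from $M$. Your explicit check that $j^1\xi\cdot\mu=D\xi\cdot\mu$ is a detail the paper leaves implicit, but otherwise the arguments coincide.
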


\subsection{Minimum number of scalar differential $G$-invariants}
It can be proved that the subspace of
$\mathrm{G}^{r+1}_n\backslash
	J^r_{_0}(\mathbb{R}^n,\mathrm{Gl}_n/G)$, 
denoted with ${G}^r$,
which is the union of the orbits of maximal dimension -- in 
$J^r_{_0}(\mathbb{R}^n,\mathrm{Gl}_n/G)$ --
is an open dense subset which -- I conjecture -- 
is a differentiable manifold
whose dimension, $m_r$, 
will be \emph{the number
	of functionally independent scalar differential $G$-invariants of $r$-th order}.

\begin{teor}[Minimum number of scalar differential $G$-invariants]
	Let $G$ be a closed subset of ${Gl}_n$ and $m=\dim G$. The number $m_r$
	of functionally independent scalar differential $G$-invariants of $r$-th order
	verifies
	\vspace{-2mm}
	\begin{equation*}
		m_r\geq (n^2-m)\binom{n+r}{n}-n\binom{n+r+1}{n}+n.
		\end{equation*}
	\vspace{-2mm}
\end{teor}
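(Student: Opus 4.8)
The plan is to identify $m_r$ with the codimension of a generic orbit of the $\mathrm{G}^{r+1}_n$-action on the jet space, and then to bound that codimension from below by two dimension counts together with one elementary inequality. By the theorem of independence from $M$ (in its $G$-structure form), the scalar differential $G$-invariants of $r$-th order are precisely the functions on the orbit space $\mathrm{G}^{r+1}_n\backslash J^r_{_0}(\mathbb{R}^n,\mathrm{Gl}_n/G)$. On the open dense stratum $G^r$ of maximal orbits the orbit dimension is locally constant, the orbits there form a regular foliation, and the maximal number of functionally independent invariants equals the local dimension of the quotient. Writing $d$ for the dimension of a maximal (generic) orbit, this gives
$$m_r=\dim J^r_{_0}(\mathbb{R}^n,\mathrm{Gl}_n/G)-d.$$
So everything reduces to computing the two dimensions on the right and estimating $d$.

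First I would compute the dimension of the jet space. Since $G$ is closed in $\mathrm{Gl}_n$ with $\dim G=m$, the homogeneous space $\mathrm{Gl}_n/G$ is a manifold of dimension $n^2-m$. An element of $J^r_{_0}(\mathbb{R}^n,\mathrm{Gl}_n/G)$ is recorded, in a chart, by the Taylor coefficients up to order $r$ of each of the $n^2-m$ component functions of a map $\mathbb{R}^n\to\mathrm{Gl}_n/G$. As the number of monomials of degree $\le r$ in $n$ variables is $\binom{n+r}{n}$, we get
$$\dim J^r_{_0}(\mathbb{R}^n,\mathrm{Gl}_n/G)=(n^2-m)\binom{n+r}{n}.$$
Next I would compute $\dim \mathrm{G}^{r+1}_n$. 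An element is the $(r+1)$-jet at $0$ of a local diffeomorphism $\xi$ with $\xi(0)=0$, invertibility of the linear part being an open condition that does not affect the dimension. Each of the $n$ components contributes its Taylor coefficients of degrees $1$ through $r+1$, that is $\binom{n+r+1}{n}-1$ parameters, so
$$\dim \mathrm{G}^{r+1}_n=n\Bigl(\binom{n+r+1}{n}-1\Bigr)=n\binom{n+r+1}{n}-n.$$

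Finally, by the orbit–stabilizer relation a generic orbit has dimension $d=\dim \mathrm{G}^{r+1}_n-\dim H$, where $H$ is the isotropy group of a generic jet, whence $d\le\dim \mathrm{G}^{r+1}_n$. Substituting into the formula for $m_r$ yields
$$m_r=(n^2-m)\binom{n+r}{n}-d\ \ge\ (n^2-m)\binom{n+r}{n}-n\binom{n+r+1}{n}+n,$$
which is the claimed estimate, with equality exactly when the generic isotropy $H$ is discrete. The main obstacle I anticipate is the very first step, not the arithmetic: to justify rigorously that the number of functionally independent invariants equals the codimension of the generic orbit one must control the orbit structure on the dense stratum $G^r$ — whose manifold structure is only conjectured — and confirm that the maximal-dimensional orbits are the ones relevant for counting independent invariants. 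The inequality itself is harmless, since it merely discards the nonnegative correction $\dim H$.
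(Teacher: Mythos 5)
Your proof is correct and takes essentially the same approach as the paper: the same two dimension counts, $\dim J^r_{_0}(\mathbb{R}^n,\mathrm{Gl}_n/G)=(n^2-m)\binom{n+r}{n}$ and $\dim \mathrm{G}^{r+1}_n=n\bigl(\binom{n+r+1}{n}-1\bigr)$, combined with the observation that passing to the quotient loses at most $\dim \mathrm{G}^{r+1}_n$ dimensions---your orbit--stabilizer step is just the explicit justification of the paper's inequality $\dim (\mathrm{G}^{r+1}_n\backslash {G}^r)\geq \dim {G}^r-\dim \mathrm{G}^{r+1}_n$. As in the paper, the identification of $m_r$ with the codimension of a generic orbit rests on the conjectured manifold structure of the maximal-orbit stratum, a dependence you correctly flag.
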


\begin{proof}
	Taking into account that
	$\dim J^r_{_0}(\mathbb{R}^n,\mathrm{Gl}_n/G)=(n^2-m)\binom{n+r}{n}$ and
	that $\dim \mathrm{G}^{r+1}_n=n\bigl(\binom{n+r+1}{n}-1\bigr)$,
	the result follows of
	$\dim (\mathrm{G}^{r+1}_n\backslash {G}^r)
	\geq \dim {G}^r -\dim \mathrm{G}^{r+1}_n$.
\end{proof}

\begin{table}[h]
	\centering
	\begin{tabular}{c|c|c|c|c|c}
		%\hline
		$_n\!\diagdown \!^r$  & 1 & 2 & 3 & 4 & 5 \\ \hline
		1 & 0 & 0 & 0 & 0 & 0 \\ \hline
		2 & - & {\bf 0}$^\dag$& 2 & 5 & 9 \\ \hline
		3 & - & 3 & 18 & 45 & 87 \\ \hline
		4 & - & 14 & 74 & 200 & 424 \\ \hline
		5 & - & 40 & 215 & 635 & 1475 \\ \hline
		6 & - & 90 & 510 & 1644 & 4164
		
	\end{tabular}
	\caption{Minimum number of $O_n$-invariants}
\end{table}
\begin{table}[h]
	\centering
	\begin{tabular}{c|c|c|c|c|c}
		%	\hline
		$_n\!\diagdown \!^r$  & 1 & 2 & 3 & 4 & 5 \\ \hline
		1 & - & - & - & - & - \\ \hline
		2 & - & - & - & - & - \\ \hline
		3 & - & - & - & 10 & 31 \\ \hline
		4 & - & - & 39 & 130 & 298  \\ \hline
		5 & - & 19 & 159 & 509 & 1223 \\ \hline
		6 & - & 62 & 426 & 1434 & 3702
		
	\end{tabular}
	\caption{Minimum number of $CO_n$-invariants}
\end{table}
In the case of parallelizations of $M$ -- $G=\{I_n\}$ --
or in the case of fields of projective frames -- $G=\{kI_n:k\neq 0\}$ --
the minimum number of the theorem coincides 
with the exact number of invariants.

In the metric case the minimum  coincides
	with the exact number of $O_n$-invariants,
	except for $n=2$ and $r=2$ in which there is an invariant [$^\dag$ in Table 1].

In the conformal case, the exact number of $CO_n$-invariants is an open problem [see ($^\ast$) in the references].

\medskip

{\small \noindent
\textbf{Author address:}
\smallskip

\noindent (\textsc{Ignacio S\'anchez Rodr\'{\i}guez}) \\
\textsc{Departamento de Geometr\'{\i}a
y Topolog\'{\i}a, Facultad de Ciencias,
Universidad de Granada, Avda.\
Fuentenueva s/n, 18071-Granada, Spain}

\noindent \textit{E-mail:\/}
\texttt{ignacios@ugr.es}

}

\end{document}